\numberwithin{equation}{section}
\theoremstyle{plain}
\newtheorem{Th}{Theorem}[section]
\newtheorem{Lemma}[Th]{Lemma}
 \theoremstyle{definition}
\newtheorem{?}[Th]{Problem}
\newcommand{\im}{\operatorname{im}}
\begin{document}

\title{Pl\"unnecke's inequality for different summands}

\author{Katalin Gyarmati}
\address{Alfr\'ed R\'enyi Institute of Mathematics\\
     Budapest, Pf. 127\\
     H-1364 Hungary
}
\email{gykati@cs.elte.hu}
\thanks{Supported by Hungarian National Foundation for Scientific Research
(OTKA), Grants No. T 43631, T 43623, T 49693. }

\author{M\'at\'e Matolcsi}
\address{Alfr\'ed R\'enyi Institute of Mathematics\\
     Budapest, Pf. 127\\
     H-1364 Hungary\\
     (also at BME Department of Analysis, Budapest, H-1111, Egry J. u. 1)
} \email{matomate@renyi.hu}
\thanks{Supported by Hungarian National Foundation for Scientific Research
(OTKA), Grants No. PF-64061, T-049301, T-047276}

\author{Imre Z. Ruzsa}
\address{Alfr\'ed R\'enyi Institute of Mathematics\\
     Budapest, Pf. 127\\
     H-1364 Hungary
}
\email{ruzsa@renyi.hu}
\email{{\rm To all authors: } triola@renyi.hu}
\thanks{Supported by Hungarian National Foundation for Scientific Research
(OTKA), Grants No. T 43623, T 42750, K 61908.}

 \subjclass{11B50, 11B75, 11P70}

    \begin{abstract}
The aim of this paper is to prove a general version of
Pl\"unnecke's inequality. Namely, assume that for finite sets $A$,
$B_1, \dots B_k$ we have information on the size of the sumsets
$A+B_{i_1}+\dots +B_{i_l}$ for all choices of indices $i_1, \dots
i_l.$ Then we prove the existence of a non-empty subset $X$ of $A$
such that we have `good control' over the size of the sumset
$X+B_1+\dots +B_k$. As an application of this result we generalize
an inequality of \cite{gymr} concerning the submultiplicativity of
cardinalities of sumsets.

    \end{abstract}

     \maketitle

     \section{Introduction}

     Pl\"unnecke \cite{plunnecke70} developed a graph-theoretic method to
estimate the density of sumsets $A+B$, where $A$ has a
positive density and $B$ is a basis. The third author published a simplified
version of his
proof \cite{r89e,r90a}. Accounts of this method can be found in
  Malouf
\cite{malouf95}, Nathanson \cite{nathanson96}, Tao and Vu \cite{taovu06}.

     The simplest instance of Pl\"unnecke's inequality for finite sets goes as
follows.

     \begin{Th} \label{plunnalk}
Let $l<k$ be integers, $A$, $B$ sets in a commutative group and
write $|A|=m$, $|A+lB|=\alpha m$. There exists an $X \subset  A$,
$X \ne   \emptyset $ such that
     \begin{equation} \label{pl1}
      |X+kB| \leq \alpha ^{k/l} |X|.
     \end{equation}
     \end{Th}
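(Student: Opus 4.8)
The plan is to deduce the theorem from Pl\"unnecke's graph-theoretic machinery, following \cite{r89e,r90a}. From the sets $A$ and $B$ one builds the layered directed graph $G$ with vertex layers $V_0,V_1,\dots,V_k$, where $V_i$ is a disjoint copy of the sumset $A+iB$, and with a directed edge from $x\in V_i$ to $y\in V_{i+1}$ exactly when $y-x\in B$. A direct computation from the definition of addition in a commutative group shows that $G$ satisfies the Pl\"unnecke commutativity condition at every interior level; this verification is entirely routine and uses only that the group is abelian.

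Next recall the magnification ratios. For non-empty $Z\subseteq V_0$ let $\im(Z,V_i)$ denote the set of vertices of $V_i$ reachable from $Z$ by a directed path, and set
\[
D_i(G)=\min_{\emptyset\ne Z\subseteq V_0}\frac{|\im(Z,V_i)|}{|Z|}.
\]
The core of Pl\"unnecke's theorem states that for a Pl\"unnecke graph the sequence $D_i(G)^{1/i}$ is non-increasing in $i$; in particular $D_k(G)^{1/k}\le D_l(G)^{1/l}$, i.e.\ $D_k(G)\le D_l(G)^{k/l}$.

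It remains to put the pieces together. Taking $Z=V_0$ (a copy of $A$) in the definition of $D_l$, and noting that every vertex of $V_l$ is reached from $V_0$ by a path of length $l$, we get $D_l(G)\le |V_l|/|V_0|=|A+lB|/|A|=\alpha$. Hence $D_k(G)\le D_l(G)^{k/l}\le\alpha^{k/l}$. Since the minimum defining $D_k(G)$ is attained, there is a non-empty $X\subseteq A$ with $|\im(X,V_k)|\le\alpha^{k/l}|X|$. Finally, because $X\subseteq A$, a vertex of $V_k$ is reachable from $X$ if and only if it lies in $X+kB$, so $\im(X,V_k)=X+kB$ and therefore $|X+kB|\le\alpha^{k/l}|X|$, which is \eqref{pl1}.

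The one substantial ingredient is the graph-theoretic core, the monotonicity of $D_i(G)^{1/i}$. It is proved in two stages: a Menger-type argument expresses $D_i(G)$ through the minimum size of a vertex set separating layer $0$ from layer $i$ and supplies a family of vertex-disjoint paths, after which one passes to a suitable power (product of $G$ with itself) to eliminate the integer-rounding loss and obtain the clean multiplicative statement. Everything else --- building $G$, checking the Pl\"unnecke condition, and the closing bookkeeping --- is immediate.
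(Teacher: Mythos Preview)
The paper does not supply a proof of Theorem~\ref{plunnalk}; it is quoted as the classical Pl\"unnecke inequality, with references to \cite{plunnecke70,r89e,r90a,nathanson96,taovu06}, and is then used as a black box (notably in the proof of Lemma~\ref{foeset}). So there is no ``paper's own proof'' to compare against.

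Your write-up is essentially the standard argument from those references: form the addition graph on the layers $A, A+B,\dots,A+kB$, check the Pl\"unnecke commutativity condition, invoke the core inequality $D_k\le D_l^{k/l}$ for the magnification ratios, bound $D_l$ by $\alpha$ via the choice $Z=V_0$, and read off the subset $X$. This is correct and is exactly the route the cited sources take. One small quibble: your closing summary of how the core inequality is proved is a bit off --- in Ruzsa's treatment the monotonicity of $D_i^{1/i}$ is obtained not via Menger and vertex-separators but by an inductive path-splitting argument exploiting the commutativity condition, combined with the tensor-power trick to remove rounding. Since you are citing the references for this step anyway, the imprecision is harmless, but you may want to adjust the description.
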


     Pl\"unnecke deduced his results from certain properties of the graph built
on the sets $A$, $A+B$, \dots , $A+kB$ as vertices (in $k+1$ different copies of
the group), where from an $x\in A+iB$ edges go to each $x+b\in A+(i+1)B$. This
property (which he called ``commutativity'') was based on the possibility of
replacing a path from $x$ to $x+b+b'$ through $x+b$ by a path through $x+b'$,
so commutativity of addition and the fact that we add the same set $B$
repeatedly seemed to be central ingredients of this method. Still, it is
possible to relax these assumptions. Here we concentrate on the second of them.

     In \cite{r89e} the case $l=1$ of Theorem \ref{plunnalk} is extended to the
addition of different sets as follows.

     \begin{Th} \label{pldiff}
Let $A$, $B_1, \dots  , B_k$ be finite sets in a commutative group and write
$|A|=m$, $|A+B_i|=\alpha _i m$, for $1\leq i\leq h$. There exists an $X
\subset A$, $X \ne \emptyset $ such that
\begin{equation}\label{difplu}
|X+B_1+ \dots +B_k| \leq \alpha _1 \alpha _2 \dots  \alpha _k |X| .
\end{equation}
     \end{Th}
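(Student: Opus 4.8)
The plan is to show that one may simply take $X=A$: it suffices to prove
\[
|A+B_1+\dots+B_k|\ \le\ \frac{|A+B_1|\,|A+B_2|\cdots|A+B_k|}{|A|^{\,k-1}},
\]
because the right-hand side is $\alpha_1\alpha_2\cdots\alpha_k\,|A|$, so this is exactly \eqref{difplu} with the (nonempty) choice $X=A$.

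I would prove this displayed bound by induction on $k$, the engine being the triangle-type inequality of Ruzsa for sumsets: for finite sets $P,Q,R$ in a commutative group,
\[
|P+Q+R|\cdot|Q|\ \le\ |P+Q|\cdot|Q+R|.
\]
For $k=1$ the claim is the identity $|A+B_1|=|A+B_1|$. For the inductive step write $A+B_1+\dots+B_k=(B_1+\dots+B_{k-1})+A+B_k$ and apply the triangle inequality with $Q=A$, $P=B_1+\dots+B_{k-1}$ and $R=B_k$; this gives
\[
|A+B_1+\dots+B_k|\cdot|A|\ \le\ |A+B_1+\dots+B_{k-1}|\cdot|A+B_k|,
\]
and dividing by $|A|$ and substituting the induction hypothesis for $|A+B_1+\dots+B_{k-1}|$ completes the induction.

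The one genuinely non-routine ingredient --- and, I expect, the main obstacle --- is the sumset triangle inequality itself: essentially all of the content of the theorem is concentrated there, and it is not purely combinatorial bookkeeping (for instance the obvious maps one is tempted to write from $(P+Q+R)\times Q$ into $(P+Q)\times(Q+R)$ fail to be injective). I would handle it either by quoting it as a known Pl\"unnecke--Ruzsa-type inequality, or by deriving it (e.g.\ from Ruzsa's covering lemma). An alternative, closer in spirit to the present paper, would be to bypass it: form the $(k+1)$-layered directed graph whose $i$-th vertex class is a copy of $A+B_1+\dots+B_i$, with an edge $x\mapsto x+b$ from class $i-1$ to class $i$ for each $b\in B_i$, and push Pl\"unnecke's magnification (Menger-type) argument through this graph. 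The delicate point there is exactly that consecutive layers now involve \emph{different} sets $B_i$, so one is proving a Pl\"unnecke-type inequality for such non-homogeneous layered graphs rather than quoting Theorem \ref{plunnalk} directly.
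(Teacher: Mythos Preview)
Your main line of attack fails at its very first step: the ``sum triangle inequality''
\[
|P+Q+R|\cdot|Q|\ \le\ |P+Q|\cdot|Q+R|
\]
is \emph{false} in general, and hence so is the claim that $X=A$ always works in \eqref{difplu}. A small counterexample in $\mathbb{Z}$: take
\[
A=\{0,1,10,11,100\},\qquad B_1=\{0,1\},\qquad B_2=\{0,10\}.
\]
Then $|A|=5$, $|A+B_1|=|A+B_2|=8$, so $\alpha_1\alpha_2|A|=64/5<13$, while $|A+B_1+B_2|=13$. Thus the displayed inequality you propose to prove by induction is already wrong for $k=2$, and no amount of quoting or covering-lemma manipulation will rescue it: the Ruzsa triangle inequality genuinely needs a difference (it reads $|A|\,|B-C|\le|A+B|\,|A+C|$, not $|A|\,|A+B+C|\le|A+B|\,|A+C|$). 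You correctly sensed that the obvious injection breaks down; the point is that it breaks down for a real reason, not a fixable one.

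The passage to a proper subset $X\subset A$ is therefore essential, not cosmetic. In the paper this theorem is not reproved but quoted from \cite{r89e}, where it is obtained by Pl\"unnecke's graph-theoretic magnification argument applied to the layered addition graph on $A,\,A+B_1,\,\dots,\,A+B_1+\dots+B_k$ --- exactly the ``alternative'' you sketch at the end. That sketch is the right approach; the delicate point you flag (different $B_i$ on consecutive layers) is precisely what \cite{r89e} handles and what the present paper's Theorem~\ref{plgen} further generalises. So your fallback is the actual proof, and your primary plan should be discarded.
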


     The aim of this paper is to give a similar extension of the general
     case. This extension will then be applied in Section \ref{sec5} to prove a
conjecture from our paper \cite{gymr}.

     \begin{Th} \label{plgen}
     Let  $l<k$ be integers, and let $A$, $B_1, \dots  , B_k$ be finite sets in
a commutative group $G$. Let $K=\{1, 2, \dots , k\}$, and for any
$I\subset K$ put
     $$   B_I = \sum _{i\in I} B_i ,  $$
     \[   \left|A \right|=m, \ \ \left|A+B_I \right| = \alpha _I m . \]
     (This is compatible with the previous notation if we identify a
one-element subset of $K$ with its element.) Write
     \begin{equation} \label{beta}
     \beta  =  \left( \prod _{L\subset K, \left|L \right|=l} \alpha _L \right)^{(l-1)! (k-l)!/(k-1)!} . \end{equation}
      There exists an $X \subset A$, $X \ne \emptyset $ such that
     \begin{equation}\label{plugen}
     |X+B_K| \leq  \beta  |X| .
     \end{equation}
     \end{Th}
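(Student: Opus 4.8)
The plan is to reduce Theorem~\ref{plgen} to the different-summands case $l=1$, namely Theorem~\ref{pldiff}. The key remark is that the exponent in \eqref{beta} equals $1/\binom{k-1}{l-1}$, so that $\beta$ is the geometric mean of the $\binom{k}{l}$ numbers $\alpha_L$ (with $|L|=l$) raised to the power $\binom{k}{l}/\binom{k-1}{l-1}=k/l$; thus Theorem~\ref{plgen} interpolates between Theorem~\ref{plunnalk} (all $B_i$ equal, exponent $k/l$) and Theorem~\ref{pldiff} ($l=1$, the product $\alpha_1\cdots\alpha_k$). Accordingly I would write $B_K$ as a sum of $l$-fold sumsets $B_L$ in all possible ways and then average.

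First comes the case $l\mid k$. For a partition $\mathcal P$ of $K$ into $k/l$ blocks of size $l$ one has $B_K=\sum_{L\in\mathcal P}B_L$ together with $|A+B_L|=\alpha_L m$, so Theorem~\ref{pldiff} applied to the $k/l$ sets $\{B_L:L\in\mathcal P\}$ produces a non-empty $X_{\mathcal P}\subset A$ with $|X_{\mathcal P}+B_K|\le\bigl(\prod_{L\in\mathcal P}\alpha_L\bigr)|X_{\mathcal P}|$. A short count shows that the number of such partitions, divided by the number of them containing a prescribed $l$-subset, equals $\binom{k-1}{l-1}$; hence the geometric mean over all $\mathcal P$ of the numbers $\prod_{L\in\mathcal P}\alpha_L$ is exactly $\beta$. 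Since the minimum of finitely many positive reals is at most their geometric mean, some partition satisfies $\prod_{L\in\mathcal P}\alpha_L\le\beta$, and the associated $X_{\mathcal P}$ proves \eqref{plugen}.

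The genuine obstacle is the case in which $l$ does not divide $k$: then $K$ has no partition into blocks of size exactly $l$, and if one patches with a single undersized block — enlarged to size $l$ so that its $\alpha$ becomes available — the extra factor incurred is, unlike when $l\mid k$, not dominated by $\beta$ and may be as large as the data permits. Roughly, any redundant use of the hypotheses forces one to bound a proper multiple of $B_K$, and Pl\"unnecke's inequality does not run backwards. Realising the balanced fractional splitting that assigns weight $1/\binom{k-1}{l-1}$ to each $B_L$ therefore requires a genuinely new step, presumably a return to Pl\"unnecke's graph method: one would work with the layered, commutative graph whose $j$-th layer gathers all the sets $A+B_I$ with $|I|=j$, and try to read off from its magnification ratios the product, rather than a cruder sum, of the numbers $\alpha_L$ — possibly after passing to a Cartesian power of $G$ and descending fibre by fibre in a way that preserves the expansion ratio. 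Making such an argument deliver exactly the constant $\beta$ is the heart of the proof; the remainder is the bookkeeping sketched above.
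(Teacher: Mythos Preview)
Your argument for the case $l\mid k$ is correct and pleasantly short: partitioning $K$ into blocks of size $l$, applying Theorem~\ref{pldiff} to each partition, and observing that the geometric mean of the resulting products $\prod_{L\in\mathcal P}\alpha_L$ is exactly $\beta$ does yield \eqref{plugen} for some partition. This is a genuinely different and more elementary route than the paper's in that special case.

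However, the case $l\nmid k$ is not ``bookkeeping'': it is the entire difficulty, and your proposal contains no argument for it. Your own diagnosis is accurate --- any attempt to pad or overlap blocks forces you to control $|A+B_I|$ for $|I|\ne l$, information you do not have, and Pl\"unnecke does not run backwards. The speculation about layered graphs and Cartesian powers points in a plausible direction but is not a proof.

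The paper proceeds quite differently and does not separate the divisible and non-divisible cases. The base step is $k=l+1$: one tensors $G$ with a product of cyclic groups $H_1\times\cdots\times H_k$ whose orders $n_i$ are chosen so that, after inflating each $B_i$ by the factor $H_i$, all the sets $A+B_{i^*}'$ (with $i^*=K\setminus\{i\}$) have \emph{the same} cardinality $m(\beta q)^l$. Then the single set $B'=\bigcup_i B_i'$ satisfies $|A+(k-1)B'|\le c\,m(\beta q)^l$, so the \emph{original} Pl\"unnecke inequality (Theorem~\ref{plunnalk}) applies and gives $|X+kB'|\le c_k(\beta q)^k|X|$; projecting back yields $|X+B_K|\le c_k\beta|X|$. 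From this base case one climbs by induction on $|J|$ (retaining a large $X$ at each step) to obtain $|X+B_J|\le c\,\beta_J|X|$ for all $J$ with $|J|>l$; this is Theorem~\ref{plgen2}, with an unspecified constant $c$. Finally the constant is removed by the tensor-power trick you mention: the magnification ratio of the bipartite graph $A\to A+B_K$ is multiplicative under direct powers, so $\gamma^r\le c\beta^r$ for all $r$ forces $\gamma\le\beta$.

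In short: your partition-and-average idea is a nice shortcut when $l\mid k$, but it does not extend, and the paper's balancing-by-cyclic-groups device followed by induction and a tensor-power cleanup is what actually carries the general case.
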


     The problem of relaxing the commutativity assumption will be the subject
of another paper. Here we just mention without proof the simplest case.

     \begin{Th}  \label{baljobb}
Let $A$, $B_1, B_2$ be sets in a (typically noncommutative group)  $G$
 and write $|A|=m$, $\left|B_1+A \right|= \alpha _1m$, $|A+B_2|=\alpha _2 m$. There is an
$X \subset  A$, $X \ne  \emptyset $ such that
     \begin{equation} \label{plnc}
     |B_1 + X+B_2| \leq  \alpha _1 \alpha _2  |X| . \end{equation}
     \end{Th}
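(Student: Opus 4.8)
The plan is to reproduce the graph-theoretic argument behind Theorems \ref{plunnalk}--\ref{pldiff}, exploiting the fact that the only feature of commutativity the method really uses is the possibility of \emph{rerouting paths}, and that this feature survives when one summand is added on the left and the other on the right. Concretely, I would build a two-level directed graph $\Gamma$ on three copies of $G$: take $V_0 = A$ in the bottom copy; let the first level of edges perform \emph{right} addition of $B_2$, so that $V_1 = A + B_2$ with an edge $a \to a + b_2$ for each $a \in A$, $b_2 \in B_2$; and let the second level perform \emph{left} addition of $B_1$, so that $V_2 = B_1 + A + B_2$ with an edge $y \to b_1 + y$ for each $y \in V_1$, $b_1 \in B_1$. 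Writing $\im_j(X)$ for the set of vertices of $V_j$ reachable from $X \subseteq V_0$, we have $\im_2(X) = B_1 + X + B_2$, so a nonempty $X$ with small top-level magnification is exactly what \eqref{plnc} asks for.

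The crucial step is to check that $\Gamma$ is a Pl\"unnecke (``commutative'') graph even though $G$ need not be abelian. The path-rerouting condition underlying the method asks, for a length-two path $a \to a + b_2 \to b_1 + a + b_2$, that the top vertex be re-reachable through an alternative middle vertex; here the alternative is $a \to b_1 + a \to b_1 + a + b_2$, and the two descriptions of the endpoint agree because $(b_1 + a) + b_2 = b_1 + (a + b_2)$. This is precisely the commutation of a left translation with a right translation, which holds in \emph{any} group; at no point is commutation of two elements of $B_1$, or of two elements of $B_2$, required. Thus $\Gamma$ satisfies the same structural axioms as the graph on $A, A+B, \dots, A+kB$ used in the commutative theory, and its two levels carry two \emph{different} added sets exactly as in the different-summands situation of Theorem \ref{pldiff}.

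With commutativity of $\Gamma$ established, I would feed it into the magnification-ratio machinery (Menger's theorem together with Pl\"unnecke's layered inequality, i.e.\ the engine of the two-summand case of Theorem \ref{pldiff}). The first level has magnification ratio at most $|V_1|/|V_0| = |A + B_2|/|A| = \alpha_2$, coming from the hypothesis $|A + B_2| = \alpha_2 m$; the second level should be governed by the left-addition datum $|B_1 + A| = \alpha_1 m$, contributing the factor $\alpha_1$. The different-summands form of the inequality then yields a nonempty $X \subseteq A$ whose top-level image satisfies $|\im_2(X)| \le \alpha_1 \alpha_2 |X|$, which is \eqref{plnc}.

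I expect the genuine obstacle to lie in the second and third paragraphs combined: one must isolate exactly which rerouting identities the Pl\"unnecke apparatus invokes and confirm that each reduces to the left--right commutation $(b_1 + a) + b_2 = b_1 + (a + b_2)$, so that the factor attached to the left level is controlled by $\alpha_1 = |B_1+A|/|A|$ rather than by the \emph{a priori} unknown quantity $|B_1 + A + B_2|/|A + B_2|$; this ``fresh'' attribution of $\alpha_1$ to the upper level is the delicate heart of the different-summands argument, now transplanted to the two-sided setting. This is also where the restriction to two summands is forced: a third summand would have to be placed on a side already occupied, reintroducing the addition of two sets on the \emph{same} side, for which the rerouting identity genuinely fails in a noncommutative group --- exactly the case deferred to a later paper.
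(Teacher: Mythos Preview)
The paper does not prove Theorem~\ref{baljobb}: immediately before it the authors say that relaxing commutativity ``will be the subject of another paper'' and that here they ``just mention without proof the simplest case.'' There is thus no proof to compare against, and I can only evaluate your sketch on its own merits.

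Your guiding intuition---that a left translation commutes with a right translation by associativity, so one summand on each side should behave like the abelian two-summand case---is surely the germ of the real argument. But the plan as written has a genuine gap. The rerouted path you display, $a\to b_1+a\to (b_1+a)+b_2$, is not a path in $\Gamma$: its first step is a left-$B_1$ move and its second a right-$B_2$ move, the opposite of how you defined the two layers, and the proposed middle vertex $b_1+a$ need not lie in $V_1=A+B_2$ at all. More fundamentally, the two-layer graph with \emph{different} summands is not a Pl\"unnecke graph even when $G$ is abelian: with $A=\{0\}$, $B_2=\{0,10\}$, $B_1=\{0,1\}$ in $\mathbb{Z}$, the middle vertex $0$ has out-neighbours $\{0,1\}$ while the only other middle vertex $10$ has out-neighbours $\{10,11\}$, so the upward splitting axiom fails outright. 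And even if $\Gamma$ \emph{were} a Pl\"unnecke graph, the inequality it would deliver is $D_2\le D_1^{2}\le\alpha_2^{2}$; the hypothesis $|B_1+A|=\alpha_1 m$ never enters, since $B_1+A$ is not a layer of $\Gamma$.

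You correctly flag this last point as ``the delicate heart'' and defer it to ``the engine of the two-summand case of Theorem~\ref{pldiff}.'' But that engine is not a Pl\"unnecke-axiom check on the different-summands graph (which, as just noted, fails those axioms): in \cite{r89e}, and in this paper's own Lemma~\ref{foeset}, the different-summands statement is obtained via an auxiliary product with cyclic groups that \emph{reduces} to the single-summand Theorem~\ref{plunnalk}. Producing an analogous device that genuinely inserts the factor $\alpha_1$ in the two-sided noncommutative setting is precisely the content of Theorem~\ref{baljobb}, and your proposal does not yet supply it.
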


     The following result gives estimates for the size of the set $X$ in Theorem \ref{plgen} and a
more general property than \eqref{plugen}, but it is weaker by a
constant. We do not make any effort to estimate this constant; an
estimate could be derived from the proof, but we feel it is
probably much weaker than the truth.

     \begin{Th} \label{plgen2}
     Let  $l<k$ be positive integers, and let $A$, $B_1, \dots  , B_k$ be
finite sets in a commutative group $G$.
     Let  $K, B_I, \alpha _I $ and $\beta $ be as in Theorem \ref{plgen}.
     For any $J\subset K$ such that $l<j=\left|J \right|\leq k$ define
     \begin{equation} \label{betadef}
     \beta _J =  \left( \prod _{L\subset J, \left|L \right|=l} \alpha _L \right)^{(l-1)! (j-l)!/(j-1)!} . \end{equation}
     (Observe that $\beta _K=\beta $ of \eqref  {beta}.)
     Let furthermore a number $\varepsilon $ be given, $0<\varepsilon <1$.
      There exists an $X \subset A$, $\left|X \right|>(1-\varepsilon )m$ such that
     \begin{equation}\label{plugen2}
     |X+B_J| \leq  c \beta _J\left|X \right|
     \end{equation}
     for every $J\subset K$, $\left|J \right|\geq l$. Here $c$ is a constant that depends
on $k,l$ and $\varepsilon $.
     \end{Th}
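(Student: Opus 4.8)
The plan is to prove first, for each fixed $J$ with $l<|J|\le k$, a large-subset version of the estimate \emph{for that single $J$}, and then to assemble these over all $J\subseteq K$ with $|J|\ge l$ by intersection. The assembly works because, for a fixed $J$, passing from $X^{(J)}$ to a subset $X\subseteq X^{(J)}$ can only decrease the \emph{absolute} quantity $|X+B_J|$, while $|X|$ stays bounded below: if $|X^{(J)}|>(1-\delta)m$ and $|X^{(J)}+B_J|\le c_J\beta_J|X^{(J)}|$, then with $\delta:=\varepsilon/2^{k+1}$ the set $X:=\bigcap_{J}X^{(J)}$ satisfies $|X|>(1-\varepsilon/2)m>m/2$ (by a union bound, since there are fewer than $2^{k}$ relevant $J$'s) and
\[
|X+B_J|\le|X^{(J)}+B_J|\le c_J\beta_J|X^{(J)}|\le c_J\beta_J m<2c_J\beta_J|X|
\]
for every such $J$. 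For $|J|=l$ there is nothing to do: $\beta_J=\alpha_J$ and already $|A+B_J|=\alpha_J|A|$, so one takes $X^{(J)}=A$ (with $c_J=1$).

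For a fixed $J$ with $j:=|J|>l$, I would build $X^{(J)}$ by iterating Theorem \ref{plgen} applied to the subfamily $\{B_i\}_{i\in J}$, with $j$ in the role of $k$. Put $A_0=A$; given $A_{r-1}\neq\emptyset$, Theorem \ref{plgen} produces a nonempty $X_r\subseteq A_{r-1}$ with $|X_r+B_J|\le\widehat\beta_r|X_r|$, where $\widehat\beta_r$ is obtained from \eqref{betadef} on replacing each $\alpha_L$ by $|A_{r-1}+B_L|/|A_{r-1}|$ (for $L\subseteq J$, $|L|=l$); set $A_r=A_{r-1}\setminus X_r$. Stop at the first $t$ with $|X_1\cup\dots\cup X_t|>(1-\delta)m$; since each $X_r$ is nonempty and disjoint from its predecessors, this terminates. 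For every $r\le t$ one has $|A_{r-1}|\ge\delta m$, hence $|A_{r-1}+B_L|/|A_{r-1}|\le|A+B_L|/(\delta m)=\alpha_L/\delta$; since the total exponent appearing in \eqref{betadef} equals $\binom{j}{l}(l-1)!(j-l)!/(j-1)!=j/l$, this gives $\widehat\beta_r\le\delta^{-j/l}\beta_J\le\delta^{-k/l}\beta_J$. Taking $X^{(J)}=X_1\cup\dots\cup X_t$ (a disjoint union) and summing,
\[
|X^{(J)}+B_J|\le\sum_{r=1}^{t}|X_r+B_J|\le\delta^{-k/l}\beta_J\sum_{r=1}^{t}|X_r|=\delta^{-k/l}\beta_J\,|X^{(J)}|,
\]
so one may take $c_J=\delta^{-k/l}$, with $|X^{(J)}|>(1-\delta)m$ as required.

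Combining the two steps yields \eqref{plugen2} with $c=2\delta^{-k/l}=2\,(2^{k+1}/\varepsilon)^{k/l}$, depending only on $k$, $l$, $\varepsilon$. The step I expect to need the most care is the intersection: a priori the \emph{ratio} $|X+B_J|/|X|$ may grow when $X$ shrinks, so it is not automatic that finitely many large "good'' subsets admit a large common refinement. The resolution is to track the monotone absolute quantity $|X+B_J|$ rather than the ratio, and to choose $\delta$ as a fixed function of $k$ and $\varepsilon$ small enough that $|X|$ stays above $m/2$. The iteration in the second step is then essentially bookkeeping: stopping once $(1-\delta)m$ of $A$ has been exhausted keeps every intermediate set $A_{r-1}$ of size at least $\delta m$, which is exactly what caps the inflation of each $\alpha_L$, and hence of $\beta_J$, by the harmless factor $\delta^{-k/l}$.
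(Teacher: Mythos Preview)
Your argument is internally correct, but within this paper it is circular: you invoke Theorem~\ref{plgen} to build each $X^{(J)}$, yet in the paper Theorem~\ref{plgen} is \emph{deduced from} Theorem~\ref{plgen2} (Section~4, via the tensor-power trick). So as a proof of Theorem~\ref{plgen2} in the logical order of the paper, the proposal assumes what is to be established.

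If one granted Theorem~\ref{plgen} independently, your route would indeed work and is different from the paper's. The paper does \emph{not} have the sharp Theorem~\ref{plgen} available at this stage; instead it first proves only the step $k'=l'+1$ with a constant (Lemma~\ref{foeset}, via an auxiliary product group $G\times H_1\times\cdots\times H_k$ and the classical Pl\"unnecke inequality), upgrades that to a large subset (Lemma~\ref{kezdolepes}), and then climbs from $|J|=l+1$ up to $|J|=k$ by an induction on $J$ (Lemma~\ref{indukcio}) that at each stage applies Lemma~\ref{kezdolepes} with the previously controlled $\beta_L$'s (for $|L|=|J|-1$) playing the role of the $\alpha$'s. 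Your approach trades this inductive climb for a direct one-shot application of the sharp inequality for each $J$, followed by an intersection over all $J$; the monotonicity trick (tracking $|X+B_J|$ rather than the ratio) is a clean way to make the intersection lossless up to a factor $2$. What the paper's approach buys is self-containment: it needs only the $k=l+1$ ingredient as input, which is all that is actually available before Theorem~\ref{plgen2} is proved.
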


     We return to the problem of finding large subsets in Section \ref{nagy}.

     \section{The case $k=l+1$}

     First we prove the case $k=l+1$ of Theorem \ref{plgen} in a form which is
weaker by a constant.

     \begin{Lemma} \label{foeset}
     Let  $l$ be a positive integer, $k=l+1$,  and let $A$, $B_1, \dots
, B_k$ be finite sets in a commutative group $G$.
     Let  $K, B_I, \alpha _I $  be as in Theorem \ref{plgen}.
  Write
      \[   \beta  =  \left( \prod _{L\subset K, \left|L \right|=l} \alpha _L \right)^{1/l} . \]
     (Observe that this is the same as $\beta $ of \eqref  {beta} in this particular
case.)
      There exists an $X \subset A$, $X \ne \emptyset $ such that
     \begin{equation}\label{spec}
     |X+B_K| \leq  c_k\beta  |X|
     \end{equation}
     with a constant $c_k$ depending on $k$.
     \end{Lemma}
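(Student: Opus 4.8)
The plan is to run Plünnecke's graph-theoretic machinery, the only real novelty being how one encodes the information about the different summands $B_1,\dots,B_k$. Recall that for a commutative layered digraph of level $k$, with magnification ratios $\mu_1,\dots,\mu_k$, Plünnecke's inequality gives $\mu_k^{1/k}\le\mu_l^{1/l}$, and the Menger-type refinement underlying Theorem~\ref{plunnalk} produces a nonempty subset of the bottom layer on which the ratio $\mu_l^{k/l}$ is essentially attained. Hence it suffices to build a commutative level-$k$ graph whose bottom layer is (a copy of) $A$, whose top-layer images read off $|X+B_K|$, and whose layer-$l$ magnification ratio is bounded by the geometric mean of the numbers $\alpha_L$ over the $\binom{k}{l}=k$ sets $L$ with $|L|=l$: since $k=l+1$, that geometric mean is $\bigl(\prod_{|L|=l}\alpha_L\bigr)^{1/k}$, and raising it to the power $k/l$ yields exactly $\beta$.

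The obstruction, and the reason the statement is only proved up to a constant, is twofold. First, adding the distinct sets $B_1,\dots,B_k$ one per level along a single chain $A\to A+B_1\to\cdots\to A+B_K$ does \emph{not} give a commutative graph -- Plünnecke's commutativity used that the \emph{same} set is added each time. I would instead use a variant of the Plünnecke graph in which layer $i$ records the $i$-element partial sums $a+b_I$ with $|I|=i$ together with a witnessing choice of summands, an edge adding one further $b_x\in B_x$; commutativity of $G$ makes the relevant diamonds close, and the layer-$l$ vertex set then has size at most $\sum_{|L|=l}|A+B_L|=m\sum_{|L|=l}\alpha_L$. Second, $\sum_{|L|=l}\alpha_L$ is an \emph{arithmetic} mean, not the geometric mean we need; to bridge this I would first symmetrise the data -- pass to $G^{k!}$, keep a full Cartesian copy of $A$ there (not the diagonal, so sumsets remain exact), and in the coordinate indexed by $\sigma\in S_k$ replace each $B_i$ by $B_{\sigma(i)}$. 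A short computation shows that every co-singleton sumset with $A$ then has the \emph{same} relative size, a fixed power of $\prod_{|L|=l}\alpha_L$, so the arithmetic mean becomes $k$ times the geometric mean; the factor $k$, the passage $G^{k!}\to G$, and the slack in the refinement together produce the constant $c_k$. (Equivalently one could attach multiplicities $\propto 1/\alpha_L$ to the branches of the graph to equalise directly.)

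Finally I would apply Plünnecke's inequality and its refinement to the symmetrised graph to get a nonempty $Y\subset A^{k!}$ with small level-$k$ ratio, and project back: in each coordinate $\sigma$ the top sumset is $B_K$ (since $\sum_i B_{\sigma(i)}=B_K$), so the top layer over all of $A^{k!}$ is $(A+B_K)^{k!}$ and a pigeonhole over the $k!$ coordinates hands back a single slice $X\subset A$ with $|X+B_K|\le c_k\beta|X|$. The hard part will be the very last step: Plünnecke's subset $Y$ need not respect the product structure of $G^{k!}$, so one must arrange the Menger refinement to deliver a sufficiently thick (ideally product-type) $Y$, or otherwise absorb the resulting discrepancy into $c_k$ -- checking this, alongside verifying commutativity of the partial-sum graph, is where the genuine work lies. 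The constant itself is not a concern, as it will be removed in the proof of Theorem~\ref{plgen} by feeding this lemma Cartesian powers and letting the exponent go to infinity.
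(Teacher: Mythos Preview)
Your outline is plausible but leaves exactly the two steps you yourself flag as unfinished, and the paper's argument sidesteps both of them by a quite different device. Instead of building a new multi-summand graph or passing to $G^{k!}$, the paper embeds into $G' = G \times H_1 \times \cdots \times H_k$ with each $H_i$ cyclic of order $n_i = \alpha_{i^*}\,q$ (where $i^* = K\setminus\{i\}$ and $q$ is a large integer parameter), sets $B_i' = B_i \times H_i$ and $B' = \bigcup_i B_i'$, and applies the \emph{classical} single-set Pl\"unnecke inequality (Theorem~\ref{plunnalk}) to $A$ and $B'$. The choice of $n_i$ forces $|A + B_{i^*}'| = m(\beta q)^l$ independently of $i$, so the symmetrisation you wanted is achieved without leaving a single copy of $A$. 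The key point is that in $A + (k-1)B'$ the only contributions of order $q^l$ come from sums with all $k-1$ indices distinct, namely the $k$ sets $A + B_{i^*}'$; any term with a repeated index is $O(q^{k-2})$ because $H_i + H_i = H_i$. Thus $|A + lB'| \le 2km(\beta q)^l$ for large $q$, Theorem~\ref{plunnalk} yields $X\subset A$ with $|X + kB'| \le c_k(\beta q)^k|X|$, and since $X + (B_K \times H) \subset X + kB'$ and $|H| = \beta^l q^k$, dividing gives $|X+B_K|\le c_k\beta|X|$ directly.

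By contrast, your route has two real holes. First, the ``partial-sum'' graph with vertices $(s,I)$, $s\in A+B_I$: you assert that ``the relevant diamonds close'', but Pl\"unnecke commutativity is stronger than a diamond condition, and here vertices at the same level carrying different index sets $I$ have genuinely different out-neighbour structures (e.g.\ at level $l=k-1$ the out-degree of $(s,I)$ is $|B_j|$ for the unique $j\notin I$, which varies with $I$). Such a graph \emph{can} be shown to be a Pl\"unnecke graph---this is essentially what underlies Theorem~\ref{pldiff}---but it is real work you have not done. Second, and more seriously, your projection from $A^{k!}$ back to $A$ ``by pigeonhole over the $k!$ coordinates'' is simply false as stated: an arbitrary $Y\subset A^{k!}$ with small ratio $|Y+B_K^{k!}|/|Y|$ has no product structure, and slicing does not control the ratio. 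The correct tool is multiplicativity of the magnification ratio under graph products (exactly what the paper invokes in Section~4), which would indeed give $\gamma \le (c\,\beta^{k!})^{1/k!}$; but you explicitly call this ``the hard part'' and leave it open. The paper's cyclic-group trick is what lets one avoid this projection altogether.
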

\begin{proof}
 Let $H_1, \dots H_k$ be cyclic groups of order $n_1,
\dots n_k$, respectively, let $H=H_1\times H_2\times \dots \times
H_k$, and consider the group $G'=G\times H=G\times H_1\times \dots
\times H_k$. Introduce the notation $B_i'=B_i\times \{0\}\times
\dots \times \{0\} \times H_i \times \{0\}\times \dots \times
\{0\}$ which will be abbreviated as $B_i'=B_i\times H_i$, in the
same manner as $A\times \{0\}\times \dots \times\{0\}$ will still
be denoted by $A$.

     We introduce the notation $i^* = K \setminus  \{i\} = \{1, \dots , i-1, i+1, \dots , k\}$
which gives naturally $B_{i^\ast}=\sum_{j\ne i}B_j$ and,
correspondingly, $\alpha_{i^\ast}=\alpha_{\{1,2, \dots, i-1, i+1,
\dots k\}}$. Note that we have $\prod  \alpha_{i^\ast}= \beta ^l $.

Similarly, let $H_{i^\ast}=H_1\times \dots \times  H_{i-1}\times \{0\}\times H_{i+1}\times \dots \times H_k$, and
$B_{i^\ast}'=\sum_{j\ne i}B_i'=B_{i^\ast}\times H_{i^\ast}$.

Let $q$ be a positive integer (which should be thought of as a
large number), and let $n_i=\alpha_{i^*} q$. We restrict $q$ to values for
which these are integers; such values exist, since the numbers $\alpha _L$ are
rational. Then $\left|H \right|=n=\prod  n_i = \beta ^l q^k$ and $\left|H_{i^\ast}
\right| = n/n_i =(\beta q)^l/\alpha_{i^*} $.
Hence
$|A+B_{i^\ast}'| = |A+B_{i^\ast}| \left|H_{i^\ast} \right| = m (\beta q)^l $
 independently of $i$.

Now, let $B'=\bigcup _{i=1}^k B_i'$, and consider the cardinality of
the set $A+(k-1)B'$. The point is that the main part of this
cardinality comes from terms where the summands $B_i'$ are all
different, i.e. from terms of the form $A+B_{i^\ast}'$,
$i=1,2,\dots , k$. There are $k$ such terms, so their cardinality
altogether is not greater than
     \begin{equation}\label{fotag} km  (\beta q)^l         . \end{equation}
 The rest of the terms
all contain some equal summands, e.g. $A+B_1'+B_1'+B_2'+B_3'\dots
+B_{k-2}'$, containing two copies of $B_1'$, etc. The number of
such terms is less than $k^k$, and each of them has `small'
cardinality for the simple reason that $H_i+H_i=H_i$. For instance, in
the example above we have $|A+B_1'+B_1'+B_2'+B_3'\dots
+B_{k-2}'|\leq m |B_1|(\prod_{j=1}^{k-2} |B_j|n_j)\leq c(A, B_1,
\dots B_k) q^{k-2}$ where $c(A, B_1, \dots B_k)$ is a constant
depending on the sets $A, B_1, \dots B_k$ but not on $q$.
Therefore the cardinality of the terms containing some equal
summands is not greater than
    \begin{equation}\label{mellektag}
    k^kc(A, B_1, \dots B_k) q^{k-2}=c(k, A, B_1, \dots B_k) q^{k-2} = o(q^l)
    \end{equation}
     Therefore, combining \eqref{fotag}and
\eqref{mellektag} we conclude that
    \begin{equation}\label{osszes}
    |A+(k-1)B'| \leq  2 km  (\beta q)^l \end{equation}
    if $q$ is chosen large enough.

Finally, we apply Theorem \ref{plunnalk} to the sets $A$ and $B'$
in $G'$. We conclude by \eqref{osszes} that there exists a subset
$X\subset A$ such that
     \begin{equation}\label{xes}|X+kB'|\leq |X| \left( 2 k  (\beta q)^l \right)^{k/l} =
     c_k |X| (\beta q)^k. \end{equation}
Also, observe that $X+(B_K\times
H)\subset X+kB'$, and $|X+(B_K\times H)|=n|X+B_K|.$ From these
facts and \eqref{xes} we obtain
     \[   |X+B_K|  \leq   c_k |X| (\beta q)^k/n = c_k \beta  \left|X \right|   \]
as desired.
\end{proof}

     \section{The general case}

     In this section we prove Theorem \ref{plgen2}.

     As a first step we add a bound on $\left|X \right|$ to Lemma \ref{foeset}.

     \begin{Lemma} \label{kezdolepes}
     Let $k=l+1$, and let $A, B_i, B_I, \alpha _I $ and $\beta $ be as in Lemma
\ref{foeset}.
     Let a number $\varepsilon $ be given, $0<\varepsilon <1$.
      There exists an $X \subset A$, $\left|X \right|>(1-\varepsilon )m$ such that
     \begin{equation}\label{g1}
     |X+B_K| \leq  c(k,\varepsilon ) \beta  \left|X \right|
     \end{equation}
     with a constant $c(k,\varepsilon ) = c_k\varepsilon ^{-\frac{k}{k-1}}$ depending on
 $k$ and $\varepsilon $.
     \end{Lemma}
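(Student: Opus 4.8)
The plan is to upgrade Lemma~\ref{foeset} from a nonempty $X$ to a large $X$ by the standard averaging/iteration trick used for such Pl\"unnecke-type statements. The starting point is Lemma~\ref{foeset} applied not to $A$ itself but to every subset of $A$: for each nonempty $Y\subset A$ we have $|Y+B_I|\le |A+B_I|=\alpha_I m$, so the hypothesis $|Y+B_I|\le (\alpha_I m/|Y|)|Y|$ holds with constants no worse than $\alpha_I m/|Y|$; hence Lemma~\ref{foeset} produces a nonempty $X\subset Y$ with $|X+B_K|\le c_k\bigl(\prod_L \alpha_L m/|Y|\bigr)^{1/l}|X| = c_k\beta (m/|Y|)|X|$. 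The point is that the "magnification'' we get improves as $|Y|$ shrinks relative to $m$.

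The iteration then runs as follows. Set $A_0=A$. Having produced $A_j\subset A$, apply the remark above with $Y=A_j$ to get a nonempty $X_j\subset A_j$ with $|X_j+B_K|\le c_k\beta (m/|A_j|)|X_j|$. If $|X_j|>(1-\varepsilon)m$ we stop and take $X=X_j$; the factor $c_k\beta(m/|A_j|)\le c_k\beta/(1-\varepsilon)$, which is of the claimed form (after absorbing the harmless $(1-\varepsilon)^{-1}$ into the general constant). Otherwise set $A_{j+1}=A_j\setminus X_j$, which is still nonempty, and repeat. The sumsets $X_j+B_K$ are pairwise disjoint (they live over the disjoint pieces $X_j$ once one works in the group $G'=G\times H$ with $B_i'=B_i\times H_i$ exactly as in the proof of Lemma~\ref{foeset} — that is really where the disjointness of translates is enforced), so summing the bounds $|X_j+B_K|\le c_k\beta (m/|A_j|)|X_j|$ over all steps and using $\sum_j |X_j|=m$ together with $|A_j|=m-\sum_{i<j}|X_i|$ gives
\[
 |A+B_K| \ \le\ \sum_j |X_j+B_K| \ \le\ c_k\beta \sum_j \frac{|X_j|}{1-(\,|X_0|+\dots+|X_{j-1}|)/m}.
\]
The right-hand sum is a Riemann-type sum for $c_k\beta m\int_0^1 (1-t)^{-1}\,dt$, which diverges; the divergence is exactly what forces some $|X_j|$ to be large. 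Quantitatively, one truncates: if the process never stopped, then every $|A_j|\le (1-\varepsilon)m$ for all $j$ beyond the first stage, and bounding each $|X_j|/(1-\sum_{i<j}|X_i|/m)$ crudely while tracking how fast $\sum_{i<j}|X_i|$ grows shows the number of steps needed to exhaust a $(1-\varepsilon)$-fraction is controlled, yielding the stated dependence $c(k,\varepsilon)=c_k\varepsilon^{-k/(k-1)}$ for the constant. The exponent $-k/(k-1)=-l^{-1}-1$... more precisely $k/(k-1)=k/l$ here since $k=l+1$, and this is the same $k/l$ exponent that appears in \eqref{pl1} and \eqref{xes}, so the loss $\varepsilon^{-k/(k-1)}$ is exactly what comes from feeding a set of relative size $\varepsilon$ into a $k/l$-power magnification bound.

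The main obstacle, and the place requiring genuine care rather than bookkeeping, is making the disjointness of the sets $X_j+B_K$ legitimate and extracting the precise exponent of $\varepsilon$. For the first point, the cleanest route is to do the whole iteration inside $G'=G\times H$ with the sets $B_i'=B_i\times H_i$ as in Lemma~\ref{foeset}: there the relevant sumset is $X+(B_K\times H)$, and one shows the pieces $X_j+(B_K\times H)$ can be arranged to be disjoint (or at least that their union is covered with bounded multiplicity), so that $\sum_j|X_j+B_K|$ is genuinely bounded by $|A+B_K|\cdot O(1)$; the bounded multiplicity costs only another $k$-dependent constant. For the second point, instead of the integral heuristic one argues directly: either some step already delivers $|X_j|>(1-\varepsilon)m$, or one shows that after $N$ steps $|A_N|\le m\prod_{j<N}(1-|X_j|/|A_j|)$ has dropped enough; choosing the stopping rule and optimizing the resulting geometric-type estimate against the $k/l$-power in the per-step bound is what produces the clean $c_k\varepsilon^{-k/(k-1)}$. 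Everything else — the monotonicity $|Y+B_I|\le|A+B_I|$, the rationality of the $\alpha_I$ needed to keep the $n_i$ integral, the identity $|X+(B_K\times H)|=n|X+B_K|$ — is already available from the proof of Lemma~\ref{foeset} and needs only to be invoked.
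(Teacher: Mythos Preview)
Your proposal has the right instinct (iterate Lemma~\ref{foeset} on shrinking remainders) but several genuine errors, and it misses the clean argument.

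First, the per-step bound is miscomputed. Applying Lemma~\ref{foeset} to $Y\subset A$ gives $\alpha_L'=\alpha_L m/|Y|$ for each of the $\binom{k}{l}=k$ sets $L$, so $\beta'=\bigl(\prod_L\alpha_L'\bigr)^{1/l}=\beta\,(m/|Y|)^{k/l}$, not $\beta\,(m/|Y|)$ as you wrote. You notice the exponent $k/l=k/(k-1)$ at the very end, but the intermediate formulae are wrong.

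Second, your stopping rule is wrong: you stop when an \emph{individual} $X_j$ has size $>(1-\varepsilon)m$, but nothing prevents every $X_j$ from having size $1$. The process simply exhausts $A$ element by element in that case. What you should track is the \emph{union} $X_0\cup\cdots\cup X_j$; once its size exceeds $(1-\varepsilon)m$ you stop, and the desired $X$ is that union.

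Third, the disjointness of the sets $X_j+B_K$ is false (and the detour into $G'=G\times H$ does not rescue it), but more importantly it is irrelevant. You want an \emph{upper} bound on $|X+B_K|$ for the union $X=\bigcup_jX_j$, and for that the trivial inequality $|X+B_K|\le\sum_j|X_j+B_K|$ suffices. The whole paragraph about bounding $|A+B_K|$ from above by a divergent Riemann sum is going in the wrong direction and proves nothing.

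With these three fixes the iteration does work: as long as $|X_0\cup\cdots\cup X_{j-1}|\le(1-\varepsilon)m$ one has $|A_j|\ge\varepsilon m$, hence $|X_j+B_K|\le c_k\beta\varepsilon^{-k/(k-1)}|X_j|$, and summing gives $|X+B_K|\le c_k\varepsilon^{-k/(k-1)}\beta|X|$. But the paper does this in one stroke by a maximality argument: take the largest $X\subset A$ satisfying \eqref{g1}; if $|X|\le(1-\varepsilon)m$, apply Lemma~\ref{foeset} once to $A'=A\setminus X$ (where $|A'|\ge\varepsilon m$, so $\beta'\le\beta\varepsilon^{-k/(k-1)}$) to obtain a nonempty $X'\subset A'$ with $|X'+B_K|\le c_k\beta'|X'|$, and then $X\cup X'$ contradicts maximality. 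No iteration, no disjointness, no integrals.
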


     \begin{proof} 
     Take the largest $X\subset A$ for which \eqref  {g1} holds. If $\left|X
\right|>(1-\varepsilon )m$, we are done. Assume this is not the case. Put $A'=A \setminus  X$, and
apply Lemma \ref{foeset} with $A'$ in the place of $A$. We know that $\left|
A'\right| \geq  \varepsilon m$. The assumptions will hold with
     \[   \alpha _I' = \left|A' + B_i \right|/\left|A' \right| \leq   \left|A + B_i \right|/\left|A' \right|
\leq  \alpha _I/\varepsilon  \]
     in the place of $\alpha _I$. We get a nonempty $X'\subset A'$ such that
     \[ |X'+B_K| \leq  c_k \beta ' |X'| \]
     with
     \[   \beta ' =  \left( \prod _{L\subset K, \left|L \right|=l} \alpha _L' \right)^{1/(k-1)} \leq  \beta
\varepsilon ^{-\frac{k}{k-1}}. \]
     Then $X\cup X'$ would be a larger set, a contradiction.
     \end{proof}

     Now we turn to the general case.

     \begin{Lemma} \label{indukcio}
     Let $J_1, \dots , J_n$ be a list of all subsets of $K$ satisfying $l<\left|
J\right|\leq k$ arranged in an increasing order of cardinality (so
that $J_n=K$); within a given cardinality the order of the sets
may be arbitrary.

     Let $A, B_i, B_I, \alpha _I $ and $\beta _I$ be as in Theorem
     \ref{plgen2}, and
     let the numbers $0<\varepsilon <1$ and $1\leq r\leq n$ be given.
      There exists an $X \subset A$, $\left|X \right|>(1-\varepsilon )m$ such that
     \begin{equation}\label{g2}
     |X+B_J| \leq  c(k,l,r,\varepsilon ) \beta _J\left|X \right|
     \end{equation}
     for every $J=J_1, \dots , J_r$ with a constant $c(k,l,r,\varepsilon )$ depending
on $k,l,r$ and $\varepsilon $.
     \end{Lemma}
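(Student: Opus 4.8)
The plan is to prove Lemma~\ref{indukcio} by induction on $r$. The base case $r=1$ follows at once from Lemma~\ref{kezdolepes}: the set $J_1$ has smallest cardinality among the $J_i$, and if $|J_1|=l+1$ this is exactly Lemma~\ref{kezdolepes} applied to the $l+1$ sets $B_i$ with $i\in J_1$ (using that $\beta_{J_1}$ coincides with the $\beta$ of that lemma for an $(l+1)$-element index set); if the smallest cardinality occurring is larger, one still applies Lemma~\ref{kezdolepes} after first descending, or more cleanly one notes the induction can be started at the first index $r$ with $|J_r|=l+1$, which exists since $l<k$. So the real content is the inductive step: assuming we have found $X\subset A$ with $|X|>(1-\varepsilon)m$ satisfying \eqref{g2} for $J=J_1,\dots,J_{r-1}$, we must produce a (possibly smaller, but still $>(1-\varepsilon)m$) subset satisfying \eqref{g2} additionally for $J=J_r$.

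The key idea is to write $J_r = L \cup \{t\}$ for a suitable $t\in J_r$, so that $L\subset J_r$ has cardinality $|J_r|-1 \geq l+1$ (this uses $|J_r|>l$, hence $|J_r|\geq l+1$, hence $|L|\geq l$; if $|L|=l$ we are in a degenerate situation handled directly, otherwise $|L|>l$ and $L$ appears earlier in the list, so by induction we already control $|X+B_L|$). First I would apply the induction hypothesis with a slightly smaller error parameter, say $\varepsilon/2$, to get $X_0$ with $|X_0|>(1-\varepsilon/2)m$ controlling $|X_0+B_{J_i}|$ for $i<r$. Then, viewing $B_{J_r}=B_L+B_t$, I would apply the $k=l+1$ machinery — more precisely Lemma~\ref{kezdolepes} or Lemma~\ref{foeset} — \emph{within the group $G$} to the set $X_0$ playing the role of $A$ and to the single "extra" summand $B_t$ together with the already-controlled block $B_L$; this is really an application of Theorem~\ref{pldiff} (the $l=1$ case for different summands) to the two sets $B_L$ and $B_t$ over the base $X_0$, or an iteration thereof, producing $X\subset X_0$ with $|X+B_L+B_t|\lesssim (|X_0+B_L|/|X_0|)(|X_0+B_t|/|X_0|)|X|$. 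Each ratio on the right is bounded: $|X_0+B_L|/|X_0|$ by the induction hypothesis (times a constant), and $|X_0+B_t|/|X_0|$ by $\alpha_t/(\varepsilon/2)$ or similar. Multiplying out and checking that the resulting exponents match the definition \eqref{betadef} of $\beta_{J_r}$ — using the combinatorial identity relating $\prod_{L\subset J_r,|L|=l}\alpha_L$ to products over the sub-blocks — yields \eqref{g2} for $J_r$; one must also re-verify \eqref{g2} for $i<r$ on the new set $X\subset X_0$, which is immediate since $|X+B_{J_i}|\leq |X_0+B_{J_i}|$ while $|X|$ only shrinks in a controlled way, or one simply passes to $X$ from the start and reruns the bounds. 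The loss of constants is absorbed into $c(k,l,r,\varepsilon)$, and the shrinkage of the error parameter from $\varepsilon$ to $\varepsilon/2$ at each of the $\leq n$ steps is harmless.

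The main obstacle I anticipate is \emph{bookkeeping the exponents}: verifying that the product of the two "ratio" bounds, each raised to the appropriate power coming from the Plünnecke-type inequality for different summands, reassembles exactly into $\beta_{J_r}=\bigl(\prod_{L\subset J_r,|L|=l}\alpha_L\bigr)^{(l-1)!(j-l)!/(j-1)!}$ with $j=|J_r|$. This requires an averaging or symmetrization over the choice of the distinguished element $t\in J_r$ (one cannot single out one $t$ without breaking symmetry of the final exponent), and matching the binomial-coefficient bookkeeping of how an $l$-subset of $J_r$ is split between $L$ and $\{t\}$. A secondary technical point is ensuring the intermediate sets $B_L$ with $|L|=l$ exactly (rather than $|L|>l$) do not cause trouble — for those the relevant bound is $|X_0+B_L|\leq \alpha_L m$ directly from the hypothesis of the lemma, not from the induction hypothesis, so the induction base must be phrased to include, or the step must separately handle, the passage from cardinality $l$ to $l+1$. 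Everything else is routine application of Theorems~\ref{plunnalk}, \ref{pldiff} and Lemmas~\ref{foeset}, \ref{kezdolepes} together with elementary estimates.
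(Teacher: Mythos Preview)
Your overall scaffold (induction on $r$, halving $\varepsilon$, passing to the subset $A'=X_0$ from the induction hypothesis) matches the paper exactly. The gap is in the inductive step itself.

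You propose to write $J_r=L\cup\{t\}$ for a \emph{single} distinguished $t$ and then apply the two-summand Pl\"unnecke inequality (Theorem~\ref{pldiff} with $h=2$) to $B_L$ and $B_t$ over $X_0$. This yields a bound of the shape $|X+B_{J_r}|\lesssim \beta_L\cdot\alpha_{\{t\}}\cdot|X|$. For $l\ge 2$ this is simply the wrong quantity: $\beta_{J_r}$ is built only from $\alpha_M$ with $|M|=l$, and the singleton factor $\alpha_{\{t\}}$ has no business appearing. You anticipate this and suggest ``averaging or symmetrization over $t$'', but that cannot be carried out: different choices of $t$ produce different subsets $X$, and even if one had a common $X$, the geometric mean $\bigl(\prod_{t\in J_r}\beta_{J_r\setminus\{t\}}\,\alpha_{\{t\}}\bigr)^{1/|J_r|}$ still does not equal $\beta_{J_r}$ (check $l=2$, $|J_r|=4$).

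What the paper does instead is apply Lemma~\ref{kezdolepes} with \emph{shifted parameters}: the index set is $J_r$, the new ``$k$'' is $k'=|J_r|$, and the new ``$l$'' is $l'=k'-1$. The inputs to that lemma are then the ratios $|A'+B_L|/|A'|$ for \emph{all} $L\subset J_r$ with $|L|=k'-1$, and each of these is bounded by $c\,\beta_L$ from the induction hypothesis (or directly by $\alpha_L/(1-\varepsilon/2)$ when $k'=l+1$). The output of Lemma~\ref{kezdolepes} is therefore automatically symmetric, and the required bookkeeping reduces to the clean identity
\[
\Bigl(\prod_{L\subset J_r,\;|L|=k'-1}\beta_L\Bigr)^{1/(k'-1)}=\beta_{J_r},
\]
which follows from \eqref{betadef} by counting, for each $l$-subset $M\subset J_r$, how many of the $(k'-1)$-subsets $L$ contain it (namely $k'-l$). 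So the missing idea is not a symmetrization after the fact, but to feed all the codimension-one subsets into Lemma~\ref{kezdolepes} at once, treating the inductively obtained $\beta_L$'s as the new ``$\alpha$''s.
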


 Theorem \ref{plgen2} is the case
$r=n$.

     \begin{proof} 
     We shall prove the
statement by induction on $r$.
     Since the sets are in increasing order of size, we have $\left|J_1
\right|=l+1$, and the claim for $r=1$ follows from Lemma \ref{kezdolepes}.

     Now assume we know the statement for $r-1$. We apply it with $\varepsilon /2$ in the
place of $\varepsilon $, so we have a set $X\subset A$, $\left|X \right|>(1-\varepsilon /2)m$ such that
 \eqref  {g2} holds for $J= J_1, \dots , J_{r-1}$ with $c(k,l,r-1, \varepsilon /2)$. Write $A'=X$.
 This set satisfies the assumptions with
     \[   \alpha _I' = \alpha _I/(1-\varepsilon /2).  \]
     We have $\left|J_r \right|=k'$ with some $k'$, $l<k'\leq k$. We are going to
apply Lemma \ref{kezdolepes} with $A', k'$ in the place of $A, k$ and $\varepsilon /2$ in
the place of $\varepsilon $. To this end we need bounds for $\left|A'+B_L \right|$ for
every $L$ such that $\left|L \right|=l'=k'-1$. By the inductive assumption we
know
     \[   \left|A'+B_L \right| \leq  c(k,l,r-1, \varepsilon /2) \beta _L \left|A' \right| .\]
     Lemma \ref{kezdolepes} gives us a set $X'\subset A'$ such that
     \[   \left|X' \right| > (1-\varepsilon /2) \left|A' \right| > (1-\varepsilon ) m\]
     and
     \[   \left| X' + B_{J_r} \right|  \leq  c(l', \varepsilon /2) \beta ' \left|X' \right|, \]
     where
     \[   \beta ' =  \left( \prod _{L\subset J_r, \left|L \right|=l'} c(k,l,r-1, \varepsilon /2) \beta _L \right)^{1/l}
= c(k,l,r-1, \varepsilon /2) \beta _{J_r} .    \]
     In the last step we used an identity among the quantities $\beta _J$ which
easily follows from their definition \eqref  {betadef}.

     The desired set $X$ will be this $X'$, and the value of the constant is
     \[ c(k,l,r,\varepsilon ) =  c(l', \varepsilon /2) c(k,l,r-1, \varepsilon /2)  .  \]
     \end{proof}

     \section{Removing the constant}

     In this section we prove Theorem \ref{plgen}. This is done
     with the help of Theorem \ref{plgen2} and
     technique of taking direct powers of the appearing groups,
     sets, and corresponding graphs.

\begin{proof}[Proof of Theorem \ref{plgen}.]
     Consider the following bipartite directed graph $\mathcal{G}^1$. The first collection of
     vertices $V_1$
     are the elements of set $A$, and the second collection of
     vertices $V_2$ are the elements of set $A+B_K$ (taken in two different
copies of the ambient group to make them disjoint).
     There is an
     edge in $\mathcal{G}^1$ from $v_1=a_1\in V_1$ to $v_2=a_2+b_{1,2}+\dots b_{k,2}\in V_2$ if and only if
there exist elements $b_{1,1},\dots b_{k,1}$ such that
$a_1+b_{1,1}+\dots b_{k,1}=a_2+b_{1,2}+\dots b_{k,2}$. The image
of a set $Z\subset V_1$ is the set $\im Z\subset V_2$ reachable
from $Z$ via edges. The {\it magnification ratio} $\gamma$ of the
the graph $\mathcal{G}^1$ is $\min \{ \frac{|\im Z|}{|Z|}, Z\subset
V_1\}.$ The statement of Theorem \ref{plgen} in these terms is
that $\gamma\leq \beta$, with $\beta$ as defined in the theorem.

Consider now the direct power $\mathcal{G}^r=\mathcal{G}^1\times
\mathcal{G}^1\times \dots \times \mathcal{G}^1$ with collections
of edges $V_1^r=V_1\times \dots \times V_1$ and $V_2^r=V_2\times
\dots \times V_2$, and edges from $(v_1^1, v_2^1, \dots , v_r^1)
\in V_1^r$ to $(v_1^2, v_2^2, \dots v_r^2)\in V_2^r$ if and only
if there exist $\mathcal{G}^1$-edges in each of the coordinates.
Observe that the directed graph $\mathcal{G}^r$ corresponds exactly to
the sets $A^r$ and $A^r+(B_1^r+ \dots + B_k^r)$ in the direct
power group $G^r$. Applying Theorem \ref{plgen2} in the group
$G^r$ to the sets $A^r, B_1^r, \dots B_k^r$ with any fixed
$\varepsilon$, say $\varepsilon =1/2$, we obtain that the
magnification ration $\gamma_r$ of $\mathcal{G}^r$ is not greater
than $c\beta^r$. On the other hand, the magnification ratio is
multiplicative (see \cite{r89e} or \cite{nathanson96}), so that we
have $\gamma_r=\gamma^r$. Therefore we conclude that $\gamma\leq
\sqrt[r]{c}\beta$ and, in the limit, $\gamma\leq \beta$ as desired.
\end{proof}

     \section{Finding a large subset} \label{nagy}

     We give an effective version of Theorem \ref{plgen2} in the original case,
that is, when only $X+B_K$ needs to be small.

     \begin{Th}  \label{plgennagy}
     Let  $A, B_i, B_I, \alpha _I $ and $\beta $ be as in Theorem \ref{plgen}.
     \item{(a)}
Let an integer $a$ be given, $1\leq a\leq m$. There exists an $X \subset  A$, $|X|\geq a$ such that
     \begin{equation} \label{nagya}
      |X+B_K| \leq     \end{equation}
      \[     \leq    \beta m^{k/l} \left( m^{-k/l} + (m-1)^{-k/l}+ \dots  +  (m-a+1)^{-k/l}
+ \bigl(\left|X \right|-a\bigr)(m-a+1)^{-k/l} \right) .       \]
     \item{(b)}
Let a real number $t$ be given, $0\leq t<m$. There exists an $X \subset  A$, $|X|>t$ such
that
     \begin{equation} \label{nagyt}
      |X+B_K| \leq    \beta m^{k/l} \left( {l\over k-l} \left( (m-t)^{1-k/l}-
m^{1-k/l}\right)+ \bigl(\left|X \right|-t\bigr)(m-t)^{-k/l} \right) .       \end{equation}
     \end{Th}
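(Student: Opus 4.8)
The plan is to establish part~(a) by induction on $a$, with Theorem~\ref{plgen} serving as the one-step tool, and then to deduce part~(b) from part~(a) by taking $a$ to be the least integer exceeding $t$ and comparing the resulting finite sum with an integral. The mechanism behind the induction is the following scaling remark. If $A'\subset A$ is non-empty with $|A'|=m'$, then $|A'+B_L|\le|A+B_L|=\alpha_L m$ for every $L$, so the ratios $\alpha_L'=|A'+B_L|/m'$ satisfy $\alpha_L'\le\alpha_L\,m/m'$; consequently the constant $\beta'$ attached to $A'$ via \eqref{beta} obeys $\beta'\le\beta\,(m/m')^{e}$, where
\[
e=\binom{k}{l}\,\frac{(l-1)!\,(k-l)!}{(k-1)!}=\frac{k}{l},
\]
the last equality being a one-line cancellation. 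Hence Theorem~\ref{plgen} applied to $A'$ produces a non-empty $X'\subset A'$ with $|X'+B_K|\le\beta\,m^{k/l}(m')^{-k/l}\,|X'|$.

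For part~(a), write
\[
g(a,s)=\beta m^{k/l}\Bigl(\textstyle\sum_{j=0}^{a-1}(m-j)^{-k/l}+(s-a)(m-a+1)^{-k/l}\Bigr),
\]
so that the assertion of (a) becomes: there is $X\subset A$ with $|X|\ge a$ and $|X+B_K|\le g(a,|X|)$. Since $g(1,s)=\beta s$, the case $a=1$ is precisely Theorem~\ref{plgen}. Assume the claim for some $a\le m-1$ and let $X$ be the set it provides. If $|X|\ge a+1$, then $|X+B_K|\le g(a,|X|)\le g(a+1,|X|)$, because for $s\ge a+1$ one has $g(a+1,s)-g(a,s)=\beta m^{k/l}(s-a)\bigl((m-a)^{-k/l}-(m-a+1)^{-k/l}\bigr)\ge0$, so $X$ already serves for $a+1$. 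If instead $|X|=a$, apply the scaling remark with $A'=A\setminus X$ and $m'=m-a\ge1$ to get a non-empty $X'\subset A'$ with $|X'+B_K|\le\beta m^{k/l}(m-a)^{-k/l}|X'|$; then $X\cup X'$ is a disjoint union of cardinality $a+|X'|\ge a+1$ and
\[
|(X\cup X')+B_K|\le g(a,a)+\beta m^{k/l}(m-a)^{-k/l}|X'|=g\bigl(a+1,\,a+|X'|\bigr),
\]
which completes the induction. This is the familiar ``peel off a Pl\"unnecke set from the complement'' scheme.

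For part~(b), let $a$ be the least integer with $a>t$; from $0\le t<m$ we get $1\le a\le m$, so part~(a) furnishes $X\subset A$ with $|X|\ge a>t$ and $|X+B_K|\le g(a,|X|)$. Now $a-1\le t$, and since $x\mapsto(m-x)^{-k/l}$ is increasing on $[0,m)$,
\[
\textstyle\sum_{j=0}^{a-1}(m-j)^{-k/l}\le\int_0^{a-1}(m-x)^{-k/l}\,dx+(m-a+1)^{-k/l}\le\frac{l}{k-l}\bigl((m-t)^{1-k/l}-m^{1-k/l}\bigr)+(m-t)^{-k/l},
\]
where we used $(m-a+1)^{-k/l}\le(m-t)^{-k/l}$; likewise the tail term obeys $(|X|-a)(m-a+1)^{-k/l}\le(|X|-a)(m-t)^{-k/l}$. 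Substituting these into $g(a,|X|)$ and invoking $a-1\le t$ once more to pass from $(|X|-a+1)(m-t)^{-k/l}$ to $(|X|-t)(m-t)^{-k/l}$ yields exactly the bound \eqref{nagyt}.

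The only computations requiring care are the exponent cancellation $\binom{k}{l}(l-1)!(k-l)!/(k-1)!=k/l$, the monotonicity inequality $g(a,s)\le g(a+1,s)$ for $s\ge a+1$, and the correct direction of the sum-versus-integral comparison in part~(b). None of these is a genuine obstacle; the single substantive ingredient is the one invocation of Theorem~\ref{plgen} inside the inductive step.
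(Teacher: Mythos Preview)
Your argument for part~(a) is correct and coincides with the paper's: induction on $a$, base case Theorem~\ref{plgen}, and in the inductive step either keep $X$ (if already large enough) or peel off a Pl\"unnecke subset from $A\setminus X$ and adjoin it. Your extra remark that $g(a,s)\le g(a+1,s)$ for $s\ge a+1$ makes explicit a point the paper leaves implicit.

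In part~(b), however, your final inequality goes the wrong way. From $a-1\le t$ you get $|X|-a+1\ge |X|-t$, not $\le$, so you cannot pass from $(|X|-a+1)(m-t)^{-k/l}$ to $(|X|-t)(m-t)^{-k/l}$; for non-integer $t$ your intermediate bound genuinely exceeds the right side of \eqref{nagyt}. The slippage occurs because you bounded the last summand $(m-a+1)^{-k/l}$ and the integral $\int_0^{a-1}$ separately, each time replacing $a-1$ by $t$, which overshoots twice. The paper's execution avoids this: write the right side of \eqref{nagyt} as $\beta m^{k/l}\int_0^{|X|}f(x)\,dx$ with $f(x)=(m-x)^{-k/l}$ on $[0,t]$ and $f(x)=(m-t)^{-k/l}$ on $(t,|X|]$; since $f$ is nondecreasing, $\int_0^{|X|}f\ge\sum_{j=0}^{|X|-1}f(j)$, and a termwise comparison (using $f(j)=(m-j)^{-k/l}$ for $j\le a-1\le t$ and $f(j)=(m-t)^{-k/l}\ge(m-a+1)^{-k/l}$ for $j\ge a>t$) shows this Riemann sum dominates the right side of \eqref{nagya}. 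The idea is the same as yours; only the bookkeeping of the comparison needs to be redone in this direction.
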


     \begin{proof} 
     To prove (a), we use induction on $a$. The case $a=1$ is Theorem
\ref{plgen}. Now suppose we know it for
     $a$; we prove it for $a+1$. The assumption gives us
a set $X$, $|X|\geq a$ with a bound on $|X+B_K| $ as given by
\eqref{nagya}. We want to find a set $X'$ with $|X'|\geq a+1$ and
     \begin{equation} \label{pld1}
     \left|X'+B_K \right|  \leq  \end{equation}
      \[     \leq    \beta m^{k/l} \left( m^{-k/l} + (m-1)^{-k/l}+ \dots  +  (m-a)^{-k/l}
+ \bigl(\left|X \right|-a-1\bigr)(m-a)^{-k/l} \right) .       \]
 If $|X|\geq a+1$, we can put $X'=X$. If $|X|=a$, we apply Theorem \ref{plgen}
to the sets $A'=A\setminus X$, $B_1$, \dots , $B_k$. In doing this the numbers $\alpha _I$
should be replaced by
     \[   \alpha _I' = {\left|A'+B_I \right|\over \left|A' \right|} \leq  {\left|A+B_I \right|\over \left|A' \right|}
= \alpha _I {m\over m-a}.       \]
 This yields a
set $Y\subset A\setminus X$ such that
     $$   |Y+B_K| \leq  \beta ' |Y|  $$
     with
     \[   \beta ' =
      \left( \prod _{L\subset K, \left|L \right|=l} \alpha _L' \right)^{(l-1)! (k-l)!/(k-1)!} \leq  \beta
\left(m\over m-a \right)^{k/l}        \]
     and we put $X'=X\cup Y$.

     To prove part (b)
we apply \eqref{nagya} with $a=[t]+1$. The right
side of \eqref {nagyt} can be written as
     $  \beta m^{k/l}  \int _0^{|X|} f(x) \,d x$,
     where $ f(x) = (m-x)^{-k/l}$ for $0\leq x\leq t$, and  $ f(x) =
(m-t)^{-k/l}$ for $t<x\leq |X|$. Since $f$ is increasing, the integral is
     $\geq  f(0)+f(1)+ \dots  + f(|X|-1) $.
     This exceeds the right side of \eqref{nagya} by a termwise comparison.
     \end{proof}

     \section{An application to restricted sums}\label{sec5}

     We prove the following result, which was conjectured in \cite{gymr}.

\begin{Th}
Let $A,B_1, \dots B_k$ be finite sets in a commutative group, and $S\subset
B_1+ \dots +B_k$. We have
\begin{equation} \label{altrestsum}
|S+A|^{k}\leq |S|\prod_{i=1}^k|A+B_{1} + \dots  + B_{i-1} + B_{i+1}+ \dots  + B_{k} |\
. \end{equation}
\end{Th}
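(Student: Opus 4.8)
The hypothesis $S\subseteq B_1+\dots+B_k$ is the only substantial ingredient, and the plan is to exploit it by passing to a power of $G$ and there invoking the ``different summands'' estimate already contained in Theorem \ref{pldiff} (equivalently, in \cite{r89e}): for finite sets $\mathcal X,\mathcal Y_1,\dots,\mathcal Y_k$ in a commutative group there is a nonempty $X\subseteq\mathcal X$ with $|X+\mathcal Y_1+\dots+\mathcal Y_k|\le|X|\,|\mathcal X|^{-k}\prod_{i=1}^k|\mathcal X+\mathcal Y_i|$, hence, since $|\mathcal Y_1+\dots+\mathcal Y_k|\le|X+\mathcal Y_1+\dots+\mathcal Y_k|$ and $|X|\le|\mathcal X|$,
\[
|\mathcal Y_1+\dots+\mathcal Y_k|\,|\mathcal X|^{k-1}\le\prod_{i=1}^k|\mathcal X+\mathcal Y_i|.
\]
Writing $B_{i^\ast}=B_1+\dots+B_{i-1}+B_{i+1}+\dots+B_k$, the target \eqref{altrestsum} reads $|S+A|^k\le|S|\prod_i|A+B_{i^\ast}|$.

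First I would fix, for each $s\in S$, a representation $s=b_1(s)+\dots+b_k(s)$ with $b_i(s)\in B_i$, so that $s-b_i(s)=\sum_{j\ne i}b_j(s)\in B_{i^\ast}$ for every $i$. The aim is to build, in a suitable power $G^N$, sets $\mathcal X$ and $\mathcal Y_1,\dots,\mathcal Y_k$ to which the displayed inequality can be applied and projected back. Comparing exponents forces the design: $\mathcal X$ should be a faithful copy of $S$ carrying the tuples $(b_1(s),\dots,b_k(s))$ (so $|\mathcal X|=|S|$); each $\mathcal Y_i$ should carry one free copy of $A$ in a block of coordinates disjoint from the others, together with translations arranged so that inside $\mathcal X+\mathcal Y_i$ that copy of $A$ only ever appears added to the single coordinate $b_i(s)$ of $\mathcal X$ and to a record of $s-b_i(s)\in B_{i^\ast}$, which caps $|\mathcal X+\mathcal Y_i|$ at $|S|\,|A+B_{i^\ast}|$; and yet, across all $k$ of the $\mathcal Y_i$ together, the $k$ free copies of $A$ should meet $k$ independent records of $S$, so that $\mathcal Y_1+\dots+\mathcal Y_k$ projects onto a set of size $|S+A|^k$, giving $|\mathcal Y_1+\dots+\mathcal Y_k|\ge|S+A|^k$.

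Granting such a configuration, the displayed inequality in $G^N$ gives at once
\[
|S+A|^k\le|\mathcal Y_1+\dots+\mathcal Y_k|\le\frac{\prod_{i=1}^k|\mathcal X+\mathcal Y_i|}{|\mathcal X|^{k-1}}\le\frac{\prod_{i=1}^k|S|\,|A+B_{i^\ast}|}{|S|^{k-1}}=|S|\prod_{i=1}^k|A+B_{i^\ast}|,
\]
which is \eqref{altrestsum}. (If one prefers to route the subset form of Theorem \ref{pldiff} through the magnification ratio rather than through the elementary step $|\mathcal Y_1+\dots+\mathcal Y_k|\le|X+\cdots|$, one can raise the whole configuration to the $r$-th direct power, apply Theorem \ref{pldiff} there, use multiplicativity of the magnification ratio as in the proof of Theorem \ref{plgen}, and let $r\to\infty$.)

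The hard part is the construction itself: one must arrange the auxiliary sets so that \emph{simultaneously} every partial sum $\mathcal X+\mathcal Y_i$ is held down to size $|S|\,|A+B_{i^\ast}|$ — which is exactly where the relation $s-b_i(s)\in B_{i^\ast}$, i.e.\ the hypothesis $S\subseteq B_1+\dots+B_k$, has to be spent in full — while the total sum $\mathcal Y_1+\dots+\mathcal Y_k$ still exhibits a full copy of $(S+A)^k$. Getting both at once is the only non-routine step; the rest is the bookkeeping above.
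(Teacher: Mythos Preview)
Your proposal is not a proof: it is a strategy whose decisive step is explicitly left undone. You reduce the statement to the existence of sets $\mathcal X,\mathcal Y_1,\dots,\mathcal Y_k$ in some $G^N$ with $|\mathcal X|=|S|$, $|\mathcal X+\mathcal Y_i|\le |S|\,|A+B_{i^\ast}|$, and $|\mathcal Y_1+\dots+\mathcal Y_k|\ge |S+A|^k$, and then you say ``getting both at once is the only non-routine step''. But that step \emph{is} the proof; everything else is formal. You give no construction, and the desiderata are genuinely in tension: to force $|\mathcal Y_1+\dots+\mathcal Y_k|\ge |S+A|^k$ the $\mathcal Y_i$ must jointly manufacture $k$ \emph{independent} copies of $S+A$, hence must carry information about $S$; yet each $\mathcal Y_i$, when added to $\mathcal X$ (itself parametrised by $S$), may only produce $|S|\,|A+B_{i^\ast}|$ elements. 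Every natural attempt (diagonal copies of $S$, putting $b_i(s)$ in coordinates, letting $\mathcal Y_i$ contribute $B_i$ off its own block) either makes $|\mathcal X+\mathcal Y_i|$ blow up to order $|S|^2$ or collapses $|\sum_i\mathcal Y_i|$ far below $|S+A|^k$. I do not see how to satisfy all three constraints simultaneously, and you have not shown how; absent that, there is no argument.

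For comparison, the paper does something entirely different and does not attempt any such encoding. It first develops a generalized Pl\"unnecke inequality (Theorems~\ref{plgen}--\ref{plgennagy}) giving, for any $0\le t<m$, a subset $X\subset A$ with $|X|>t$ and an explicit bound on $|X+B_K|$ in terms of $t$ and $\beta=\bigl(\prod_i|A+B_{i^\ast}|\bigr)^{1/(k-1)}/m^{k/(k-1)}$. One then writes $|S+A|\le |X+B_K|+|S|\,(m-|X|)$, chooses $t=m-(s/|S|^{k-1})^{1/k}$ to balance the two terms, and obtains $|S+A|\le k(s|S|)^{1/k}$; the factor $k$ is removed by the tensor-power trick. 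The hypothesis $S\subset B_1+\dots+B_k$ is spent exactly once, via $|S+X|\le |B_K+X|$. So the actual proof routes through a \emph{quantitative large-subset} version of Pl\"unnecke with $l=k-1$, not through a product-space embedding into the $l=1$ inequality of Theorem~\ref{pldiff}.
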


     Two particular cases were established in \cite{gymr}; the case when $S$ is the
complete sum $B_1+\dots +B_k$, and the case $k=2$. The proof in
the sequel is similar to the proof of the case $k=2$, the main
difference being that we use the above generalized Pl\"unnecke
inequality, while for $k=2$ the original was sufficient.

\begin{proof}
Let us use the notation $|A|=m$, $s=\prod_{i=1}^k|A+B_{1} + \dots
+ B_{i-1} + B_{i+1}+ \dots  + B_{k} |$. Observe that if $|S|\leq
(s/m^k)^{\frac{1}{k-1}}$ then
\begin{equation} \label{skicsi}
|S+A|\leq |S||A|=|S|^{\frac{1}{k}}|S|^{\frac{k-1}{k}}m \leq
(|S|s)^\frac{1}{k}
\end{equation}
and we are done.

If $|S|> (s/m^k)^{\frac{1}{k-1}}$, then we will use Theorem \ref{plgennagy},
part (b) with $l=k-1$. Note that the $\beta $ of this theorem can be expressed by
our $s$ as
     \[   \beta  = s^{1/(k-1)} m^{-k/(k-1)} .  \]

We take $t=m- \left (\frac{s}{|S|^{k-1}}\right )^{1/k}.$
Then there exists a set $X\subset A$ such that $|X|=r>t$ and
\eqref{nagyt} holds. For such an $X$ we have
\begin{equation} \label{s1felso}
     |S+X|\leq |B_K+X| \leq (k-1)s^{\frac{1}{k-1}}\left( (m-t)^{-\frac{1}{k-1}}-m^{-\frac{1}{k-1}} \right )+(r-t) \left ({s\over (m-t)^k}\right )^{\frac{1}{k-1}}
     \end{equation}
     and we add to this the trivial bound
\begin{equation} \label{s2felso}
|S+(A\setminus X)|\leq |S||A\setminus X|=|S|(m-r).
\end{equation}

We conclude that
\begin{equation} \label{sfelso}
|S+A|\leq |S+X|+|S+(A\setminus X)|\leq
(k-1)s^{\frac{1}{k-1}}\left(
(m-t)^{-\frac{1}{k-1}}-m^{-\frac{1}{k-1}} \right )+
\end{equation}
\begin{equation*}(r-t) \left ({s\over (m-t)^k}\right
)^{\frac{1}{k-1}}+|S|((m-t)-(r-t))=ks^{1/k}|S|^{1/k}-(k-1)\left
(\frac{s}{m}\right )^{\frac{1}{k-1}}\leq k(s|S|)^{1/k}
\end{equation*}
This inequality is nearly the required one, except for the factor
 $k$ on the right hand side. We can dispose of this factor as follows (once again, the
method of direct powers). Consider the sets $A'=A^r,$
$B_j'=B_j^r$ ($j=1, \dots k$), and $S'=S^r$ in the $r$'th direct
power of the original group.
 Applying equation \eqref{sfelso}
to $A',$ etc., we obtain
\begin{equation} \label{s'felso}
|S'+A'|\leq k(s'|S'|)^{1/k}.
\end{equation}
Since
 $|S'+A'|=|S+A|^r,$ $s'=s^r$ and
$|S'|=|S|^r$,  we get
\begin{equation} \label{sfelsouj}
|S+A|\leq k^{1/r}(s|S|)^{1/k}.
\end{equation}
Taking the limit as $r\to \infty $ we obtain the desired
inequality
\begin{equation} \label{sfelsovege}
|S+A|\leq (s|S|)^{1/k}.
\end{equation}

\end{proof}


 %
 %

     \end{document}